\newtheorem{thm}{Theorem}[section]
\newtheorem{lem}[thm]{Lemma}
\newtheorem{prop}[thm]{Proposition}
\newcommand{\thmref}[1]{Theorem~\ref{#1}}
\newcommand{\lemref}[1]{Lemma~\ref{#1}}
\theoremstyle{remark}
\begin{document}

\title[Rankin's method]
{Rankin's method and Jacobi forms of several variables} 
\author{B. Ramakrishnan and Brundaban Sahu}
\address{Harish-Chandra Research Institute, 
        Chhatnag Road, Jhusi,
       Allahabad \linebreak
       211 019,
       India.}
\email[B. Ramakrishnan]{ramki@hri.res.in}
\email[Brundaban Sahu]{sahu@hri.res.in}

\maketitle

\section{Introduction}
There are many interesting connections between differential operators and the 
theory of modular forms and many interesting results have been studied. 
In \cite{rankin}, R. A. Rankin gave a general description of the differential 
operators which send modular forms to modular forms.
In \cite{co}, H. Cohen constructed bilinear operators and obtained elliptic 
modular form with interesting Fourier coefficients. 
In \cite{zag}, D. Zagier studied the algebraic properties of 
these bilinear operators and called them as Rankin-Cohen brackets. In \cite{zag1}, 
following Rankin's method, Zagier computed the $n$-th Rankin-Cohen 
bracket of a modular form $g$ of weight $k_1$ with the Eisenstein series 
of weight $k_2$ and then computed the inner product of this Rankin-Cohen 
bracket with a cusp form $f$ of weight $k = k_1+k_2+2n$ and showed that 
this inner product gives, upto a constant, the special value of the 
Rankin-Selberg convolution of $f$ and $g$. 

\smallskip

Rankin-Cohen brackets for Jacobi forms were studied by Y. Choie 
\cite{{choie1},{choie2}} by using the heat operator. 
Following the work of Zagier mentioned in the above paragraph, Y. Choie and W. Kohnen 
\cite{choie5} generalized the above result of Zagier to Jacobi forms. They 
computed the Petersson scalar product $\langle f,[g,E_{k_2,m_2}]_\nu\rangle$ of a 
Jacobi cusp form $f$ of weight $k$, index $m$ against the 
Rankin-Cohen bracket $[g,E_{k_2,m_2}]_\nu$ of a Jacobi form $g$ of weight $k_1$, 
index $m_1$ and the Jacobi Eisenstein series $E_{k_2,m_2}$ of weight $k_2$, 
index $m_2$, where $k= k_1+k_2+2\nu$ and 
$m= m_1+m_2$.  Although the concept of Rankin-Selberg convolution has not been 
done yet in the case of Jacobi forms, the above mentioned work of Choie and Kohnen 
gives the inner product considered in terms of the special value of a kind of 
Rankin-Selberg type convolution of the Jacobi forms $f$ and $g$. 

\smallskip

In this paper, we generalize the work of Choie and Kohnen to Jacobi forms defined 
over ${\mathcal H} \times {\mathbb C}^{(g, 1)}$. Since the method is similar, we 
shall give only a brief sketch of the proof with the corresponding steps. 

\bigskip

\section{Preliminaries on Jacobi forms over ${\mathcal H} \times 
{\mathbb C}^{(g,1)}$}

\smallskip

Let $g\ge 1$ be a fixed positive integer. The Jacobi group  
$\Gamma^J_{1, g}= \Gamma_1 \ltimes \left({\mathbb Z}^{(g,1)} \times {\mathbb 
Z}^{(g,1)}\right)$ acts on ${\mathcal H} \times {\mathbb C}^{(g,1)}$ in the 
usual way by 
$$ 
\left( \begin{pmatrix} 
               a & b \\
               c & d \\ 
\end{pmatrix}, (\lambda, \mu) \right) \circ(\tau, z)=
\left( \frac {a\tau+b}{c\tau+d}, \frac{z+\lambda \tau+ \mu}{c\tau+d} \right),
$$
where $\Gamma_1 = SL_2({\Bbb Z})$ is the full modular group. 
 
Let $k \in {\mathbb Z}$ and $M$ be a positive definite symmetric half-integral
matrix of size $g \times g$. If $\gamma = \left( 
\begin{pmatrix}
               a & b \\
               c & d \\
               \end{pmatrix}, (\lambda, \mu) \right) \in \Gamma^J_{1,g}$ 
and 
$\phi$ is a complex valued function on ${\mathcal H} \times {\mathbb 
C}^{(g,1)}$,  
then define 
$$ 
\phi|_{k, M} \gamma := (c\tau+d)^{-k} e(-c(c\tau+d)^{-1}M[z+\lambda \tau+ \mu]+
M[\lambda] \tau + 2\lambda^t M z) \phi(\gamma \circ(\tau, z)),
 $$
where $A[B] = B^t A B$ with $A, B$ matrices of appropriate size.

\smallskip

Let $J_{k, M}$ be the space of Jacobi forms of weight $k$ and index
$M$ on  $\Gamma^J_{1, g}$. That is, the space of holomorphic functions 
$\phi: {\mathcal H} \times \mathbb C^{(g,1)} \rightarrow \mathbb C$ 
satisfying
$ \phi|_{k, M} \gamma = \phi$,  $\forall \gamma \in  \Gamma^J_{1, g}$ and 
having a Fourier expansion of the form 
$$ 
\phi(\tau, z)= \sum_{n\in \mathbb Z, r \in {\mathbb Z}^g, 4n \ge M^{-1}[r^t]} 
c(n, r) e(n\tau+rz).
$$   
Further, we say that $F$ is a cusp form if and only if $c(n,r) \not =0$ 
implies 
$4n > M^{-1}[r^t]$. We denote the space of all Jacobi cusp forms by 
$J^{cusp}_{k, M}$.  

\smallskip

For $F, G \in J_{k, M}$ with one of them a Jacobi cusp form, the Petersson 
inner product is defined as 
$$
\langle F, G\rangle = \int_{\Gamma^J_{1, g}\backslash {\mathcal H} \times 
{\mathbb C}^{(g,1)}} F(\tau, z) \overline{G(\tau, z)} v^k e(-4\pi 
M[y] \cdot v^{-1}) ~dV^J_g,
$$
where $\tau= u+iv, z=x+iy$, and  $dV^J_g = v^{-g-2}dudvdxdy$ is the invariant 
measure. The space $(J^{cusp}_{k, M},\langle, \rangle)$ is a finite 
dimensional 
Hilbert space. For more details on Jacobi forms on ${\mathcal H} \times 
{\mathbb C}^{(g,1)}$ we refer to \cite{{boch},{ziegler}}.

\bigskip

\subsection{Poincar\'e series}

Let $n \in \mathbb Z, r \in \mathbb Z^g$, with $4n > M^{-1}[r^t]$. For 
$k>g+2$  let $P_{k, M;(n, r)}$ be the $(n, r)$-th Poincar\'e series in 
$J_{k,M}^{cusp}$ characterized by 

\begin{equation}\label{poincare}
\langle \phi, P_{k, M;(n, r)} \rangle = 
\lambda_{k, M, D} ~c_\phi(n, r) \qquad
{\rm for ~all~} \phi \in J^{cusp}_{k, M}, 
\end{equation}
where $c_\phi(n, r)$ denotes the $(n, r)$-th Fourier coefficient of $\phi$ and 
$$ 
\lambda_{k, M, D}:= 2^{(g-1)(k-g/2-1)-g} \Gamma(k-g/2-1) 
\pi^{-k+g/2+1} (\det M)^{k-(g+3)/2} D^{-k+g/2+1}, 
$$ 
$$
D ~= ~\det(2T), 
~T =\begin{pmatrix}
               n & r/2 \\
               r^t/2 & M \\
               \end{pmatrix}.
$$

\smallskip 

The Poincar\'e series $P_{k, M;(n, r)}$ has the following Fourier
expansion  
$$ 
P_{k, M;(n, r)}(\tau, z)= \!\!\!\!
\sum_{n' \in {\mathbb Z}, r' \in {\mathbb Z}^g, 4n' > M^{-1}[r'^t]} 
\!\!\!(g_{k, M;(n, r)}(n', r')+ (-1)^k g_{k, M;(n, r)}(n', -r'))
~e(n'\tau + r'z),
$$ 
where 
\begin{equation*}
\begin{split}
g_{k, M;(n, r)}(n', r') = \delta_M(n, r, n', r') +  i^{-k} \pi 2^{1-g/2} 
(\det M)^{-1/2} \hskip 5cm& \\
\quad \qquad \qquad \cdot (D'/D)^{k/2-g/4-1/2}\sum_{c \ge 1} H_{M,c}(n, r, n', 
r')
 J_{k-g/2-1} \left(\frac{\pi \sqrt{D'D}}{2^{g-1}\det M \cdot c}\right),& \\
\end{split} 
\end{equation*}
where 
$$
D := \det 2 \begin{pmatrix}
               n' & r'/2 \\
               r'^t/2 & M \\
               \end{pmatrix},
$$ 
\begin{equation*}
\delta_m(n, r, n', r'):= \left\{\begin{array}{ll} 1 & if D=D', r' \equiv r 
\pmod{ {\mathbb Z}^g \cdot 2M}\\
0& otherwise\\
\end{array}\right. 
\end{equation*}
and 
$$ 
H_{M,c}(n, r, n', r') = c^{-g/2-1} \sum_{x({\rm mod~} c),\atop{y({\rm mod~} 
c)^*}} e_c((M[x]+rx+n)y^{-1} + n'y + r'x) e_{2c}(r'M^{-1}r^t)
$$
[In the above, $x$ (resp. $y$) runs over a complete set of representatives 
for 
${\mathbb Z}^{(g,1)}/c {\mathbb Z}^{(g,1)}$ (resp. (${\mathbb Z}/c{\mathbb 
Z})^*$, and $y^{-1}$ denotes an inverse of $y \pmod{c}$, 
$e_c(a):= e^{2 \pi ia/c}$, 
$a \in {\mathbb Z}$, and $J_{k-g/2-1}$ denotes the Bessel function of order 
$k-g/2-1$.]  For details we refer to \cite[Lemma 1]{boch}.

\bigskip

\section{Generalized Heat Operator}
For a positive definite symmetric  half-integral matrix $M = (m_{ij})$ of size 
$g \times g$, we define the heat operator by
\begin{equation}
L_M:= 8 \pi i |M| \frac{\partial}{\partial \tau}- \sum_{1 \leq i, j \le g} 
M_{ij}
 \frac{\partial}{\partial {z_i}}\frac{\partial}{\partial {z_j}},
\end{equation}
where $|M| = \det M$,  $M_{ij}$ is the cofactor of the entry $m_{ij},$ 
$\tau \in \mathcal H, z^t=(z_1, z_2, \cdots, z_g)$ $\in \mathbb C^{g}$.  
Note that when $g=1$ the above heat operator  reduces to the classical heat 
operator, viz., $8 \pi i m \frac{\partial}{\partial \tau} - 
\frac{\partial^2}{\partial {z}^2}$. 

\smallskip

Let $r^t= (r_1, r_2, \cdots, r_g)$. Then using the fact that  
\begin{equation*}
\begin{split}
\frac{\partial}{\partial \tau}\left(e(n\tau + rz) \right) &
= 2\pi i n ~e(n\tau + rz),\\
\frac{\partial}{\partial z_\alpha}\left(e(n\tau + rz)\right) 
&= 2\pi i ~r_\alpha 
~e(n\tau + rz), \quad 1\le \alpha\le g,\\
\frac{\partial}{\partial z_\alpha}\frac{\partial}{\partial z_\beta}
\left(e(n\tau + rz)\right) &= (2\pi i)^2 ~r_\alpha r_\beta
~e(n\tau + rz), \quad 1\le \alpha,\beta\le g,
\end{split}
\end{equation*}

\smallskip

we get 
\begin{equation}
\begin{split} 
L_M(e(n\tau +rz) & =  
~ 8 \pi i |M| \cdot 2\pi in \cdot  
e(n\tau + rz)  -  \sum_{1 \leq \alpha, \beta 
\leq g} M_{\alpha\beta} (2 \pi i)^2 r_\alpha r_\beta~  
e(n\tau + rz)\\
& =~(2 \pi i)^2(4n|M|- \tilde{M}[r^t])~e(n\tau +rz),
\end{split}
\end{equation} 
where $\tilde{A}$ denotes the matrix $(A_{ij})$ with $A_{ij}$ being the 
cofactor of the $ij$-th entry of the symmetric matrix $A$.

\bigskip

We obtain the action of the heat operator on Jacobi forms in the following 
lemma.

\begin{lem} \label{lem:action}
Let $ F \in J_{k, M}.$ Then 
\begin{equation}
 (L_M F)\big\vert_{k+2, M} ~A = L_M(F\big\vert_{k, M}~A) +
(8 \pi i|M|)\left(k-\frac{g}{2}\right)\left(\frac{\gamma}
{\gamma \tau+\delta}\right) (F\big\vert_{k, M} ~A),  
\end{equation}
for all $ A = \begin{pmatrix}
               * & * \\
               \gamma & \delta \\
               \end{pmatrix} \in \Gamma_1 $. 
In general, for any integer $\nu \ge 0$, 
\begin{equation}
\begin{split}\label{matrixaction}
&(L^\nu_M F)|_{k+2\nu, M} A \hskip 12cm \\
&\qquad = \sum^\nu_{l=0} \binom{\nu}{l}(8 \pi i |M|)^{\nu-l}
\frac{(k-g/2+\nu-1)!}{(k-g/2+l-1)!}
 \left(\frac{\gamma}{\gamma\tau+\delta}
 \right) ^{\nu-l} L^l_M(F|_{k, M} A).  
\end{split}
\end{equation}
Moreover, for all $\lambda, \lambda' \in \mathbb Z^g,$
\begin{equation}\label{latticeaction}
 L_M(F|_M[\lambda, \lambda'])=(L_MF)|_M [\lambda, \lambda'].
\end{equation}
\end{lem}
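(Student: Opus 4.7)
The lemma is a direct chain-rule computation, driven by the single algebraic identity $\tilde M\,M=|M|\,I_g$, which is built into the definition of $L_M$ precisely so that the various cross terms below collapse.

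\emph{Step 1: the weight-$(k+2)$ identity.} Identifying $A\in\Gamma_1$ with $(A,(0,0))\in\Gamma^J_{1,g}$, and writing $J:=\gamma\tau+\delta$, $\tau^{\ast}:=(\alpha\tau+\beta)/J$, $z^{\ast}:=z/J$, one has
$$
(F|_{k,M}A)(\tau,z)=\Psi(\tau,z)\,F(\tau^{\ast},z^{\ast}),\qquad \Psi:=J^{-k}e\bigl(-\gamma J^{-1}M[z]\bigr).
$$
I would compute $L_M(F|_{k,M}A)$ by expanding $\partial_\tau$ and $\partial_{z_i}\partial_{z_j}$ on this product via Leibniz and the chain rule. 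The output falls into three groups. The pure second-order-in-$F$ terms reassemble as $\Psi J^{-2}(L_MF)(\tau^{\ast},z^{\ast})=((L_MF)|_{k+2,M}A)(\tau,z)$. The first-order-in-$F$ terms consist of $-\gamma\,\Psi J^{-2}\sum z_iF_{z_i}$ (coming from the $\tau$-derivative of $F(\tau^\ast,z^\ast)$) together with the mixed pieces $(\partial_{z_i}\Psi)(\partial_{z_j}F)$ and their $i\leftrightarrow j$ counterparts under $-\sum M_{ij}\partial_{z_i}\partial_{z_j}$; these cancel via $\sum_i M_{ij}(Mz)_i=|M|z_j$. The zeroth-order-in-$F$ terms contain a piece quadratic in $M[z]$ that vanishes through $\tilde M[Mz]=|M|\,M[z]$, together with a piece linear in $\gamma/J$ which, using $\sum_{ij}M_{ij}m_{ij}=g|M|$, collapses to $-8\pi i|M|(k-\tfrac{g}{2})(\gamma/J)\,(F|_{k,M}A)$. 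Rearranging yields the first identity of the lemma.

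\emph{Steps 2 and 3.} For \eqref{matrixaction} I would induct on $\nu$, the case $\nu=1$ being Step 1. Apply Step 1 to $L^\nu_MF$ in weight $k+2\nu$ and substitute the inductive formula on both sides; since $\gamma/J$ depends only on $\tau$, the $z$-derivative part of $L_M$ passes through it, and the only nontrivial Leibniz contribution is $\partial_\tau(\gamma/J)=-(\gamma/J)^2$. Collecting the coefficient of $L^m_M(F|_{k,M}A)$ reduces the induction step to the identity
$$
\binom{\nu+1}{m}(k-\tfrac{g}{2}+\nu)=\binom{\nu}{m-1}(k-\tfrac{g}{2}+m-1)+\binom{\nu}{m}(k-\tfrac{g}{2}+\nu+m),
$$
which, via Pascal, boils down to $m\binom{\nu}{m}=(\nu-m+1)\binom{\nu}{m-1}$. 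For \eqref{latticeaction}, write $(F|_M[\lambda,\lambda'])(\tau,z)=\Phi(\tau,z)F(\tau,z+\lambda\tau+\lambda')$ with $\Phi:=e(M[\lambda]\tau+2\lambda^tMz)$, and apply $L_M$ by Leibniz. The $F$-coefficient $-16\pi^2|M|M[\lambda]$ from $8\pi i|M|\,\partial_\tau\Phi$ and the corresponding $+16\pi^2\tilde M[M\lambda]=+16\pi^2|M|M[\lambda]$ from $-\sum M_{ij}\partial_{z_i}\partial_{z_j}\Phi$ cancel; the $\nabla F$-cross terms $8\pi i|M|\lambda^t\nabla F$ from the $\tau$-chain rule cancel the symmetric pair $-\sum M_{ij}(\partial_{z_i}\Phi)(\partial_{z_j}F)$ via $\sum_i M_{ij}(M\lambda)_i=|M|\lambda_j$. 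What survives is $\Phi\cdot(L_MF)(\tau,z+\lambda\tau+\lambda')=(L_MF)|_M[\lambda,\lambda']$.

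\emph{Expected obstacle.} The real work is in Step 1, where the chain-rule expansion produces many terms and the crucial cancellations depend on the three contractions $\sum_{ij}M_{ij}m_{ij}=g|M|$, $\tilde M[Mv]=|M|M[v]$ and $\sum_i M_{ij}(Mv)_i=|M|v_j$ of $\tilde M M=|M|I_g$, combined with careful accounting of the powers of $\gamma$ and $J^{-1}$. Once Step 1 is secure, Steps 2 and 3 are essentially organised book-keeping.
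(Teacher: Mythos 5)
Your proposal is correct, and all the key cancellations check out (the first‑order terms via $\sum_i M_{ij}(Mv)_i=|M|v_j$, the zeroth‑order ones via $\tilde M[Mv]=|M|M[v]$ and $\mathrm{tr}(\tilde M M)=g|M|$, the induction identity reducing to $m\binom{\nu}{m}=(\nu-m+1)\binom{\nu}{m-1}$, and the use of $\partial_\tau(\gamma/(\gamma\tau+\delta))=-(\gamma/(\gamma\tau+\delta))^2$). The paper itself gives no computation — it only cites Lemma 3.3 of Choie–Kim — and your direct chain‑rule argument is precisely the proof that citation stands in for.
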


\begin{proof}
Though our $L_M$ operator differs (slightly) from the operator defined in 
\cite{choie4}, the proof goes along the same lines of the proof of 
Lemma 3.3 of \cite{choie4}. 
\end{proof}


We now define the Rankin-Cohen bracket for Jacobi forms on ${\mathcal H}\times 
{\mathbb C}^{(g,1)}$. 

\smallskip

\noindent {\bf Definition:} 
~~Let $\nu \geq 0 $ be an integer and let $F \in J_{k_1, M_1}, G \in 
J_{k_2, M_2}$, where $k_1, k_2$ are positive integers and $M_1$ and $M_2$ are 
positive-definite, symmetric half-integer matrices of size $g\times g$. 
Define the $\nu$-th Rankin-Cohen bracket of $F$ and $G$ by 
\begin{equation}
[F, G]_\nu= \sum^\nu_{l=0}(-1)^l \binom{k_1-\frac{g}{2}+\nu-1}{\nu-l}
\binom{k_2-\frac{g}{2}+\nu-1} {l}|M_1|^{\nu-l}|M_2|^l L^l_{M_1}(F) 
L^{\nu-l}_{M_2}(G)
\end{equation}


Using \lemref{lem:action} we show that the Rankin-Cohen bracket  
$[~,~]_\nu$ gives a bilinear map from $J_{k_1, M_1} \times J_{k_2, M_2}$ 
to $J_{k_1+k_2+2\nu, M_1+M_2}$ (in fact, to $J^{cusp}_{k_1+k_2+2\nu, 
M_1+M_2}$ if $\nu >0$). 


\begin{prop}
Let $\nu \ge 0$ be an integer and let $F \in J_{k_1, M_1}, G \in 
J_{k_2, M_2}$. Then $[F, G]_\nu \in J_{k_1 + k_2 + 2\nu, M_1 + M_2}$. If 
$\nu >0$, then $[F, G]_\nu \in J_{k_1 + k_2 + 2\nu, M_1 + M_2}^{cusp}$. 
\end{prop}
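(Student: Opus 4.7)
\emph{Proof plan.} My strategy is to verify directly that $[F,G]_\nu$ satisfies the three defining conditions of a Jacobi form of weight $k_1+k_2+2\nu$ and index $M_1+M_2$, using \lemref{lem:action} as the main tool; holomorphy is immediate since $L_{M_i}$ is a holomorphic differential operator applied to a holomorphic function.

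\emph{Modular transformation under $\Gamma_1$.} Fix $A = \begin{pmatrix} * & * \\ \gamma & \delta \end{pmatrix} \in \Gamma_1$. Because $(M_1+M_2)[z] = M_1[z] + M_2[z]$, the automorphy factor of $|_{k_1+k_2+2\nu, M_1+M_2} A$ splits as the product of the factors of $|_{k_1+2l, M_1} A$ and $|_{k_2+2(\nu-l), M_2} A$, so the slash operator distributes over the product $L^l_{M_1}F \cdot L^{\nu-l}_{M_2}G$ into these two weight-index pairs. Applying (\ref{matrixaction}) to each factor, and using $F|_{k_1,M_1}A=F$ and $G|_{k_2,M_2}A=G$, I would expand into a double sum and regroup by the total power $j$ of $\gamma/(\gamma\tau+\delta)$. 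The $j=0$ part recovers $[F,G]_\nu$ itself, and the task reduces to showing that the coefficient of each $j>0$ vanishes. After cancelling common factorials and powers of $|M_i|$, this is a Chu--Vandermonde-type binomial identity; the coefficients $(-1)^l \binom{k_1-g/2+\nu-1}{\nu-l}\binom{k_2-g/2+\nu-1}{l}|M_1|^{\nu-l}|M_2|^l$ in the definition are chosen precisely to satisfy it. This bookkeeping is the main calculation, and the step I expect to be the most delicate.

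\emph{Lattice invariance, Fourier support, and cuspidality.} The slash operator for lattice translations $[\lambda,\lambda']$ is multiplicative in the index, so combining this with iterated application of (\ref{latticeaction}) and the $M_i$-invariance of $F$ and $G$ shows that every summand of $[F,G]_\nu$ is invariant under $[\lambda,\lambda']$ with index $M_1+M_2$. For the Fourier expansion, the eigenvalue identity $L_{M_i}(e(n\tau+rz)) = (2\pi i)^2 |M_i|(4n-M_i^{-1}[r^t])\, e(n\tau+rz)$ derived in the excerpt shows that $L^l_{M_i}$ preserves the Jacobi cone $4n \ge M_i^{-1}[r^t]$, with strict inequality in the support whenever $l\ge 1$. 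A Fourier coefficient of $L^l_{M_1}F \cdot L^{\nu-l}_{M_2}G$ at $(n,r)$ is the convolution over $(n_1+n_2, r_1+r_2)=(n,r)$, and the classical convexity estimate
$$
(M_1+M_2)^{-1}[(r_1+r_2)^t] \;\le\; M_1^{-1}[r_1^t] + M_2^{-1}[r_2^t],
$$
which follows by minimizing $v_1^t M_1^{-1} v_1 + v_2^t M_2^{-1} v_2$ subject to $v_1+v_2=v$, then gives $4n \ge (M_1+M_2)^{-1}[r^t]$, establishing $[F,G]_\nu \in J_{k_1+k_2+2\nu, M_1+M_2}$. When $\nu>0$, in every summand at least one of $l$ and $\nu-l$ is positive, so at least one factor forces strict inequality, yielding $4n > (M_1+M_2)^{-1}[r^t]$ and hence cuspidality.
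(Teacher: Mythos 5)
Your proposal is correct and its core — splitting the automorphy factor of $|_{k_1+k_2+2\nu,M_1+M_2}A$ over the two factors, applying \eqref{matrixaction} to each, regrouping by the power of $\gamma/(\gamma\tau+\delta)$, and killing the lower-order terms with an alternating binomial identity, together with \eqref{latticeaction} for the lattice part — is exactly the paper's argument. The one genuine difference is that you also verify the Fourier cone condition and cuspidality for $\nu>0$ via the eigenvalue $L_{M_i}(e(n\tau+rz))=(2\pi i)^2|M_i|\bigl(4n-M_i^{-1}[r^t]\bigr)e(n\tau+rz)$ and the convexity inequality $(M_1+M_2)^{-1}[(r_1+r_2)^t]\le M_1^{-1}[r_1^t]+M_2^{-1}[r_2^t]$; the paper asserts these conclusions but leaves that step implicit, so your write-up is the more complete of the two.
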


\begin{proof}
By \eqref{latticeaction} we see that the action of the heat operator on Jacobi 
forms is invariant under the lattice action and so the invariance with 
respect to 
the lattice action of the Rankin-Cohen bracket follows from the definition. 
It remains to show that the Rankin-Cohen bracket is invariant, under the 
stroke operation, with respect to the group action. Making use of 
\eqref{matrixaction}, we see that for $A =\begin{pmatrix} * &*\\ c&d\\ 
\end{pmatrix} \in \Gamma_1$

\begin{equation}\label{RC-action}
\begin{split}
&[F,G]_\nu\big\vert_{k_1+k_2+2\nu,M_1+M_2} A \hskip 6cm \\ 
& ~~ = \sum^\nu_{l=0}(-1)^l \binom{k_1'+\nu}{\nu-l}
\binom{k_2'+\nu} {l}|M_1|^{\nu-l}|M_2|^l ~ L^l_{M_1}(F)\big\vert_{k_1+2l, M_1}A 
~ L^{\nu-l}_{M_2}(G)\big\vert_{k_2+2(\nu-l), M_2}A \\
& ~~ = \sum^\nu_{l=0}(-1)^l \binom{k_1'+\nu}{\nu-l} \binom{k_2'+\nu}{l}
\left(\sum_{u=0}^{l}\sum_{v=0}^{\nu-l} \binom{l}{u} 
\binom{\nu-l}{v}(8\pi i)^{\nu-u-v}|M_1|^{\nu-u} |M_2|^{\nu-v} \right.\\
& \hskip 4cm \times ~ \left. \frac{(k_1'+l)!}{(k_1'+u)!} ~
\frac{(k_2'+\nu -l)!}{(k_2'+v)!} \left(\frac{c}{c\tau +d}\right)^{\nu -u-v} 
L_{M_1}^u(F) ~L_{M_2}^{v}(G) \right), \\
\end{split}
\end{equation}
where $k_j' = k_j-\frac{g}{2}-1$, $j=1,2$. When $u+v = \nu$, the right-hand 
side becomes $[F,G]_\nu$, and so it remains to show that the terms 
corresponding 
to $u+v < \nu$ vanish. It is easy to see that the coefficient corresponding to 
$L_{M_1}^u(F) ~L_{M_2}^{v}(G)$, with $u+v\le \nu-1, ~u\le v$ is given by 
$$
\left(\frac{8\pi ic}{c\tau +d}\right)^{\nu-u-v}|M_1|^{\nu-u} |M_2|^{\nu-v} 
~\frac{(k_1'+\nu)!}{u! (k_1'+u)!} \frac{(k_2'+\nu)!}{v! (k_2'+v)!}
\sum_{l =u}^{\nu-v} \frac{(-1)^l}{(l-u)! (\nu-v-l)!} 
$$
and the sum in the last expression is equal to zero. This completes the proof.
\end{proof}

\smallskip

We shall now state the main theorem of this paper.

\begin{thm}\label{main}
Let $ F \in J^{cusp}_{k, M}$ with Fourier coefficients $a(n, r)$ and 
$G \in J_{k_1, M_1}$ with Fourier coefficients $b(n, r)$. Let $E_{k_2, M_2}$ 
be the Jacobi Eisenstein series in $J_{k_2,M_2}$ such  that 
$ k=k_1+k_2+2\nu$ with   $ \nu \geq 0, M=M_1+M_2$ and $k_1>g+2, k_2> k_1+g+2.$
Then 
\begin{equation}\label{main-equ}
\langle F, [G, E_{k_2, M_2}]_\nu\rangle = 
c_{k, k_2, M, M_2, g; \nu} \sum_{n \in {\mathbb Z}, r \in {\mathbb Z}^g,
\atop{ 4n \ge {M}_1^{-1}[r^t]}}
~\frac{(4n|M_1|- \tilde{M}_1[r^t])^\nu ~a(n, r)
\overline{b(n,r)}}{(4n|M|- \tilde{M}[r^t])^{k-g/2-1}},
\end{equation}
where 
\begin{equation}\label{constant}
c_{k, k_2,M, M_2, g; \nu} = 
2^{k'(g-1)-g-2\nu} \pi^{-k'-2\nu} |M|^{k'-1/2} |M_2|^{-\nu} 
\Gamma(k') \frac{\nu!~k_2'!}{(k_2'+\nu)!},
\end{equation}
with $k' = k - g/2 -1$, $k_2' = k_2 - g/2 -1$.
\end{thm}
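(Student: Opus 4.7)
The plan is to follow Rankin's unfolding method, as implemented by Zagier in the elliptic case and adapted to index-one Jacobi forms by Choie--Kohnen in \cite{choie5}. The first step is to represent the Jacobi Eisenstein series as a coset sum
$$
E_{k_2,M_2}(\tau,z) \;=\; \sum_{\gamma \in \Gamma^J_{\infty,g}\backslash\, \Gamma^J_{1,g}} 1\big\vert_{k_2,M_2}\gamma,
$$
where $\Gamma^J_{\infty,g}$ denotes the stabilizer of the cusp at infinity in $\Gamma^J_{1,g}$. Substituting this into $[G,E_{k_2,M_2}]_\nu$ and applying \eqref{matrixaction} to commute each iterated heat operator $L^{\nu-l}_{M_2}$ past the slash action writes the bracket as a coset sum whose summands involve $L^l_{M_1}(G)$ times slashed copies of $1$, multiplied by polynomials in $\gamma/(\gamma\tau+\delta)$.

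The second step is to pair this expression with $F$ in the Petersson inner product and apply the unfolding trick: the coset sum over $\Gamma^J_{\infty,g}\backslash \Gamma^J_{1,g}$ collapses the integration over a fundamental domain of $\Gamma^J_{1,g}$ into an integration over a fundamental domain of $\Gamma^J_{\infty,g}$, which I parametrize by $u\in[0,1]$, $v\in(0,\infty)$, $x\in[0,1]^g$, $y\in\mathbb{R}^g$. The growth conditions $k_1>g+2$ and $k_2>k_1+g+2$ stated in the theorem provide the absolute convergence needed to justify this interchange.

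In the third step I insert the Fourier expansions of $F$ and $G$. The eigenvalue formula from \S3 shows that $L^l_{M_1}$ acts on $e(n\tau+rz)$ by the scalar $(-4\pi^2)^l(4n|M_1|-\tilde M_1[r^t])^l$, so $L^l_{M_1}(G)$ is the Fourier series whose $(n,r)$-coefficient is $(-4\pi^2)^l(4n|M_1|-\tilde M_1[r^t])^l\,b(n,r)$. The $u$- and $x$-integrations enforce equality of Fourier indices and collapse the double sum to a diagonal sum over $(n,r)$ weighted by $a(n,r)\overline{b(n,r)}\,(4n|M_1|-\tilde M_1[r^t])^l$. The $y$-integration against the weight $e(-4\pi M[y]/v)$ is a Gaussian producing $|M|^{-1/2}$ and an appropriate power of $v$; the remaining $v$-integral is a standard Gamma integral yielding $\Gamma(k-g/2-1)\,(4n|M|-\tilde M[r^t])^{-(k-g/2-1)}$, which gives the denominator of \eqref{main-equ}.

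I expect the main obstacle to be the combinatorial collapse: after these integrations, the finite sum over $l\in\{0,\dots,\nu\}$ from the Rankin-Cohen definition, together with the auxiliary sums coming from \eqref{matrixaction} applied to $1\big\vert_{k_2,M_2}\gamma$, must simplify to the clean numerator $(4n|M_1|-\tilde M_1[r^t])^\nu$ and produce exactly the constant $c_{k,k_2,M,M_2,g;\nu}$ of \eqref{constant}. This is the higher-rank analogue of the Vandermonde-style identity used in the $g=1$ case of \cite{choie5}, and establishing it requires careful bookkeeping of the powers of $2,\pi,|M|,|M_2|$ and of the factorial quotients. Once this identity is in hand, the Fourier-coefficient form \eqref{main-equ} can be cross-checked against the Poincar\'e reproducing formula \eqref{poincare}.
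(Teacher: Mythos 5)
Your overall strategy --- direct Rankin unfolding of $\langle F,[G,E_{k_2,M_2}]_\nu\rangle$, with the $u,x$-integrals diagonalizing the Fourier sums, a Gaussian $y$-integral, and a Gamma-function $v$-integral producing the denominator $(4n|M|-\tilde M[r^t])^{k-g/2-1}$ --- is viable and is in the spirit of Zagier and of Choie--Kohnen for $g=1$. It is, however, not the organization of the paper, and the one step you defer is the entire mathematical content of the proof. The paper never unfolds the Petersson integral by hand: it first proves (Proposition~\ref{represent}) the identity $[G,E_{k_2,M_2}]_\nu = c_{k_1,k_2,M_1,M_2,g;\nu}\sum_{n,r}(4n|M_1|-\tilde M_1[r^t])^\nu\, b(n,r)\,P_{k,M;(n,r)}$, after which the theorem follows in one line from the reproducing property \eqref{poincare}, $\langle F,P_{k,M;(n,r)}\rangle=\lambda_{k,M,D}\,a(n,r)$; your Gaussian and Gamma integrals are exactly what is packaged inside the cited B\"ocherer--Kohnen constant $\lambda_{k,M,D}$. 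The ``combinatorial collapse'' you flag as the main obstacle is precisely Proposition~\ref{represent}, and ``careful bookkeeping'' plus an unspecified ``Vandermonde-style identity'' does not establish it. Note that the paper proves that proposition in the direction opposite to yours, which makes the combinatorics cleaner: one starts from the Poincar\'e-series sum, rewrites it as $\sum_\gamma(1\vert_{k,M}\gamma)\,(2\pi i)^{-2\nu}(L^\nu_{M_1}G)(\gamma\circ(\tau,z))$, expands $(L^\nu_{M_1}G)\vert_{k_1+2\nu,M_1}\gamma$ by \eqref{matrixaction} into a single sum over $l$ with factors $(c/(c\tau+d))^{\nu-l}L^l_{M_1}(G)$, and recognizes each inner coset sum as a multiple of $L^{\nu-l}_{M_2}E_{k_2,M_2}$ via Lemma~\ref{actionheat}; the Rankin--Cohen binomial coefficients then fall out with no cancellation. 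Your direction --- expanding the bracket and hoping the cross terms cancel after integration --- forces you to prove the vanishing of the alternating sums, exactly as in the paper's proof that $[F,G]_\nu$ is a Jacobi form.

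A second gap: the assertion that ``the growth conditions $k_1>g+2$ and $k_2>k_1+g+2$ provide the absolute convergence needed'' is the conclusion, not an argument. Justifying the interchange of the coset sum, the Fourier sums, and the integration over the unfolded domain requires (i) a polynomial bound on $b(n,r)$, which for non-cuspidal $G$ needs the Eisenstein-coefficient estimate of Lemma~\ref{eisen-expan}, proved via Poisson summation; and (ii) uniform absolute convergence of the doubly indexed series over $(n,r)$ and $\gamma$ on sets containing the fundamental domain, which the paper obtains in Lemma~\ref{absolute} only by embedding $\Gamma^J_{1,g}$ into the Siegel modular group $\Gamma_{g+1}$ and majorizing by a degree-$(g+1)$ Siegel Poincar\'e series, following Kohnen and Maass. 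Finally, you omit the boundary terms: the indices with $4n=M_1^{-1}[r^t]$ force $(n,r)=(0,0)$ for the index-$M$ form, contributing $E_{k,M}$ rather than a cusp Poincar\'e series, and one must note $\langle F,E_{k,M}\rangle=0$ because $F$ is cuspidal.
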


\smallskip

The rest of this section is devoted to a proof of \thmref{main}.

\bigskip

\subsection{Action of heat operator on Eisenstein series} ~~
Let $M$ be as before and let $E_{k, M}$ be the Jacobi Eisenstein series of
 weight $k$ and index $M$ defined 
by 
\begin{equation}
E_{k, M} = \sum_{\gamma \in \Gamma^J_{1, g, \infty}\backslash \Gamma^J_{1, g}} 
1\big\vert_{k, M} \gamma, 
\end{equation}
where $\Gamma^J_{1, g, \infty} =\left\{ \left( \left( \begin{array}{cc}
               1 & a \\
               0 & 1
               \end{array}\right),(0, \mu)\right)\big\vert a \in {\mathbb Z}, 
               \mu \in \mathbb Z^g \right\}.$

\bigskip

\begin{lem} \label{actionheat}
For a positive integer $\nu$, we have  
\begin{equation}\label{heataction}
\begin{split}
(L^\nu_M E_{k, M})(\tau, z) ~=~ 
(-4)^\nu \frac{\Gamma(k-g/2+\nu)}{\Gamma(k-g/2)}|M|^\nu \hskip 6cm & \\
\quad 
\sum_{ \left(\begin{array}{cc}
               a & b \\
               c & d
               \end{array}\right) \in \Gamma_{1, g,\infty}^J/\Gamma_{1,g}^J 
\lambda \in \mathbb Z } \!\!\!\!\!\!\!
(2\pi ic)^\nu
(c\tau+d)^{-k-\nu} e\left(M[\lambda]\frac{a \tau+b}{c\tau+d} 
+ \frac{2 \lambda^t Mz}{c\tau+d} -\frac{cM[z]}{c\tau+d}\right)
&\\
\end{split}
\end{equation}
\end{lem}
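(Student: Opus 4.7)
My plan is to apply $L_M^\nu$ termwise to the sum defining the Eisenstein series. Writing $E_{k,M}(\tau,z) = \sum_\gamma \phi_\gamma(\tau,z)$ with $\phi_\gamma := 1\big|_{k,M}\gamma$, each summand has the explicit form $\phi_\gamma = (c\tau+d)^{-k}\, e(\Phi_\gamma)$ where $\Phi_\gamma = -cM[z]/(c\tau+d) + 2\lambda^t Mz/(c\tau+d) + M[\lambda](a\tau+b)/(c\tau+d)$ is the phase written in the statement. The first task is the one-step identity
\begin{equation*}
L_M \phi_\gamma \;=\; -(8\pi i\, |M|)\,(k-g/2)\,\frac{c}{c\tau+d}\,\phi_\gamma.
\end{equation*}
Two routes are natural. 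A direct computation gives $\partial_\tau \Phi_\gamma = M[cz-\lambda]/(c\tau+d)^2$ and $\partial_{z_i}\Phi_\gamma = -2(M(cz-\lambda))_i/(c\tau+d)$; substituting into $L_M$, the quadratic contributions of $M[cz-\lambda]/(c\tau+d)^2$ cancel between the $\partial_\tau$ part and the $\partial_{z_i}\partial_{z_j}$ part, and what survives is the displayed right-hand side. The cancellation uses the cofactor identities $\sum_{i,j}M_{ij}(Mv)_i(Mv)_j = |M|\,M[v]$ and $\sum_{i,j}M_{ij}m_{ij} = g|M|$, both immediate from $\tilde M = |M|\,M^{-1}$ for symmetric $M$. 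Alternatively, one can apply the computation behind \lemref{lem:action} with $F = 1$: since $L_M 1 \equiv 0$, the commutation relation between $L_M$ and the slash action forces the same identity, first for $\gamma$ with trivial lattice part and then for arbitrary $\gamma$ via (\ref{latticeaction}).

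For the iteration, I exploit that $c/(c\tau+d)$ depends only on $\tau$ and satisfies $\partial_\tau\bigl(c/(c\tau+d)\bigr)^n = -n\bigl(c/(c\tau+d)\bigr)^{n+1}$. Combined with the one-step formula and the product rule,
\begin{equation*}
L_M\!\left[\left(\tfrac{c}{c\tau+d}\right)^n\phi_\gamma\right] = -(8\pi i|M|)(n+k-g/2)\left(\tfrac{c}{c\tau+d}\right)^{n+1}\phi_\gamma,
\end{equation*}
and a straightforward induction on $\nu$ yields
\begin{equation*}
L_M^\nu \phi_\gamma \;=\; (-8\pi i|M|)^\nu\,\frac{\Gamma(k-g/2+\nu)}{\Gamma(k-g/2)}\left(\tfrac{c}{c\tau+d}\right)^\nu\phi_\gamma.
\end{equation*}

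To finish, I sum over $\gamma$ and regroup $(c/(c\tau+d))^\nu(c\tau+d)^{-k} = c^\nu(c\tau+d)^{-k-\nu}$; using $(-8\pi i)^\nu = (-4)^\nu(2\pi i)^\nu$ this matches the prefactor $(-4)^\nu(2\pi ic)^\nu|M|^\nu\,\Gamma(k-g/2+\nu)/\Gamma(k-g/2)$ appearing in (\ref{heataction}). I expect the main obstacle to be the one-step identity: the cancellation of the $M[cz-\lambda]/(c\tau+d)^2$ contributions is what makes $L_M\phi_\gamma$ a scalar multiple of $\phi_\gamma$ that can be iterated cleanly; without the cofactor identities the expression would not collapse to a scalar eigenvalue relation and the induction would not go through.
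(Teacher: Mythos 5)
Your proposal is correct and follows essentially the same route as the paper: apply $L_M^\nu$ termwise to the coset sum, establish the one-step eigenvalue relation $L_M\phi_\gamma=-(8\pi i|M|)(k-g/2)\tfrac{c}{c\tau+d}\phi_\gamma$ via the cofactor identities $\sum_{i,j}M_{ij}(Mv)_i(Mv)_j=|M|\,M[v]$ and $\sum_{i,j}M_{ij}m_{ij}=g|M|$, and iterate to produce the Pochhammer factor $\Gamma(k-g/2+\nu)/\Gamma(k-g/2)$. The only cosmetic difference is that the paper first computes $L_M$ on $1\big|_{k,M}A$ alone and then appends the lattice translate via \eqref{latticeaction}, whereas your primary computation handles the full phase at once — your stated ``alternative route'' is exactly the paper's argument.
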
 

\begin{proof}
Using the definition of the Eisenstein series, we have 
$$
L^\nu_M E_{k, M}= 
\sum_{\gamma \in \Gamma^J_{1, g, \infty}/ \Gamma^J_{1, g}} 
L^\nu_M\left(1\big\vert_{k, M} \gamma\right). 
$$
By taking $ \left( \left(\begin{array}{cc}
               a & b \\
               c & d
               \end{array}\right), (a \lambda, b \lambda) \right)$ as a set 
of coset representatives in the above sum, where $\left(\begin{array}{cc}
               a & b \\
               c & d
               \end{array}\right) \in \Gamma_1, \lambda \in
{\mathbb Z}^g$,   we have 
$$
 L^\nu_M E_{k, M}= \sum_{ \left(\begin{array}{cc}
               a & b \\
               c & d
               \end{array}\right) \in SL_2(\mathbb Z), \lambda \in
 \mathbb Z^g.} L^\nu_M\left(1\big\vert_{k, M}  \left(\begin{array}{cc}
               a & b \\
               c & d
               \end{array}\right)\right)\big\vert_M(a\lambda, b\lambda). 
$$
It is easy to see that 
\begin{equation*}
\begin{split}
L_M\left(1\big\vert_{k, M}  \left(\begin{array}{cc}
               a & b \\
               c & d
               \end{array}\right)\right)~=~ 
L_M\left((c\tau+d)^{-k} e\left( \frac{-c~M[z]}{c\tau+d}\right)\right) \hskip 
7cm &\\
  =-8 \pi i c|M|(k-g/2) 
(c\tau+d)^{-k-1}e\left( \frac{-c ~M[z]}{c\tau+d}\right), \hskip 4.5cm &\\
\end{split}
\end{equation*}
where we have used the fact that 
\begin{equation*}
\begin{split}
\sum_{1 \le \alpha, \beta \le g} 
M_{\alpha\beta}  \left(\sum_{1 \leq i \leq g} m_{i\beta}z_i \right) 
\left(\sum_{1 \leq i \leq g}m_{i\alpha} z_i \right)  &= 
 ~|M| \sum_{1 \leq \alpha, \beta \le g} m_{\alpha\beta}z_\alpha
z_\beta, \\
\sum_{1 \le \alpha, \beta \le g} M_{\alpha\beta}  m_{\alpha\beta}& = g|M|\\
\end{split}
\end{equation*}
Therefore, 
\begin{equation}
\begin{split}
L^\nu_M\left((c\tau+d)^{-k} e\left( \frac{-c ~M[z]}{c\tau+d}\right)\right) 
\hskip 9cm & \\
~~~= (-4)^\nu \frac{\Gamma(k-g/2+\nu)}{\Gamma(k-g/2)}|M|^\nu(2\pi ic)^\nu 
(c\tau+d)^{-k-\nu} e\left( \frac {-c ~M[z]}{c\tau+d}\right).\hskip 2cm 
\end{split}
\end{equation}
Since,  
\begin{equation}
\begin{split}
(c\tau+d)^{-k-\nu} e\left( \frac
{-c ~M[z]}{c\tau+d}\right)\big\vert_{k, M}
[a \lambda, b \lambda] \hskip 6cm & \\
\quad = (c\tau+d)^{-k-\nu}~
e\left(M[\lambda]\frac{a \tau+b}{c\tau+d}
+ \frac{2 \lambda^t Mz}{c\tau+d} -\frac{cM[z]}{c\tau+d}\right), &
\end{split}
\end{equation}
the required result 
follows.
\end{proof}

\smallskip

\subsection{Representation of $[G, E]_{\nu}$ in terms of the Poincar\'e 
series}~~
We first obtain a growth estimate for the Fourier coefficients of a 
Jacobi form.

\smallskip

\begin{lem}\label{eisen-expan}
Let $k > g+2$ and   $F \in J_{k, M}$ with Fourier coefficients $c(n, r)$. 
Put $D_1 =\sum_{i, j}M_{ij}r_ir_j - 4n |M|$.
Then
\begin{equation}\label{eisen-esti}
c(n, r) \ll |D_1|^{k-g/2-1}, \qquad {\rm ~if~} D_1 <0.
\end{equation}
Moreover, if $F$ is a cusp form, then
\begin{equation}
c(n, r) \ll |D_1|^{k/2-g/2}.  
\end{equation}
In the above the $\ll$ constants depend only on $k, g$ and $|M|.$
\end{lem}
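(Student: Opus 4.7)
The plan is to prove both bounds by extracting $c(n,r)$ via Fourier inversion and then applying a sup-norm bound on $F$. For any $v > 0$ and $y \in \mathbb{R}^g$ one has
$$c(n,r) = e^{2\pi(nv + r \cdot y)}\int_0^1\!\!\int_{[0,1]^g} F(u+iv, x+iy)\, e^{-2\pi i(nu + r \cdot x)}\, du\, dx,$$
so that
$$|c(n,r)| \le e^{2\pi(nv + r \cdot y)}\, \sup_{(u,x) \in [0,1]^{g+1}}|F(u+iv, x+iy)|. \qquad (\ast)$$
The key idea is to choose $y = -\tfrac{v}{2} M^{-1} r^t$, for which a short computation gives $nv + r \cdot y + M[y]/v = v |D_1|/(4|M|)$, thereby replacing the $(n,r)$-dependence of the exponent by a clean $|D_1|$-dependence, and then to optimize over $v$.

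For the cuspidal estimate, I would invoke the $\Gamma^J_{1,g}$-invariance of $|F(\tau,z)|^2 v^k e^{-4\pi M[y]/v}$: since $F$ is cuspidal, this function is bounded on a fundamental domain of $\Gamma^J_{1,g} \backslash (\mathcal{H} \times \mathbb{C}^{(g,1)})$ and hence globally. This yields $|F(\tau,z)| \ll v^{-k/2}\, e^{2\pi M[y]/v}$. Substituting into $(\ast)$ with the $y$ above and choosing $v$ proportional to $|M|/|D_1|$ produces an estimate of the right shape. The precise exponent $k/2 - g/2$ is most cleanly recovered via the theta decomposition $F = \sum_{\mu \bmod 2M\mathbb{Z}^g} h_\mu(\tau)\, \theta_{M,\mu}(\tau,z)$, where each $h_\mu$ is an elliptic cusp form of weight $k - g/2$ on $SL_2(\mathbb{Z})$ (with a finite-dimensional Weil-type multiplier); the classical Hecke bound for $h_\mu$, applied at index proportional to $|D_1|$, translates directly into a bound on $c(n,r)$ at least as strong as the stated one.

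For the general (non-cuspidal) case, I would decompose $F = E + C$ into an Eisenstein and a cuspidal component with respect to the Petersson inner product. The cuspidal piece $C$ is controlled by the previous paragraph and its contribution grows strictly slower than $|D_1|^{k-g/2-1}$ for $k \ge 2$. For the Eisenstein piece $E$, the Fourier coefficients admit an explicit expression as generalized divisor-type sums (in the same vein as the expansion of $E_{k,M}$ recalled in Section 2, cf.\ \cite{ziegler,boch}), from which one estimates directly that $|c_E(n,r)| \ll |D_1|^{k-g/2-1}$. Combining the two contributions gives the stated bound.

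The main obstacle is obtaining the sharp exponent $k/2 - g/2$ in the cuspidal case: the naive application of $(\ast)$ with only the invariant sup-norm bound yields the weaker $|D_1|^{k/2}$, and bridging the gap requires a refinement. The theta decomposition gives the cleanest route, since it reduces the estimate to the classical Hecke-type bound for elliptic modular forms of weight $k - g/2$; alternatively one can perform a weighted $L^\infty$ analysis that accounts for the $g$-dimensional lattice direction in $y$. Once the refined sup-norm bound is in place, the optimization over $v$ in $(\ast)$ is routine and produces the two asserted estimates simultaneously.
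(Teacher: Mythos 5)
Your overall architecture (Fourier inversion at an optimally chosen $(v,y)$, plus a decomposition of $F$ into Eisenstein and cuspidal pieces) is sound, and your exponent bookkeeping for the naive bound is right: with $y=-\tfrac{v}{2}M^{-1}r^t$ the invariant sup-norm bound does give only $|D_1|^{k/2}$. The genuine gap is in how you propose to close the distance to $k/2-g/2$. In the theta decomposition $F=\sum_\mu h_\mu\theta_{M,\mu}$ the components $h_\mu$ have weight $k-\tfrac{g}{2}$, so the classical Hecke bound gives $c(n,r)\ll |D_1|^{(k-g/2)/2}=|D_1|^{k/2-g/4}$, which is \emph{weaker} than the asserted $|D_1|^{k/2-g/2}$ by a factor $|D_1|^{g/4}$; your claim that this route yields a bound ``at least as strong as the stated one'' is therefore false for every $g\ge 1$. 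The exponent $k/2-g/2$ is not of Hecke type at all: the paper does not prove it but quotes it from B\"ocherer--Kohnen \cite{boch}, where it comes from a genuinely different mechanism (Petersson norms of the Poincar\'e series $P_{k,M;(n,r)}$, i.e.\ a Rankin--Selberg/unfolding argument, not a sup-norm argument). So either you must import that result as a citation, as the paper does, or supply that harder argument; your sketch as written does not establish the cuspidal estimate.

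For the non-cuspidal part your plan coincides with the paper's, but the step you wave at is precisely where the content lies. The paper unfolds $E_{k,M}$ into the $c=0$ term plus $\sum_{c\ge 1}c^{-k}\sum_{d,\lambda}e(\tfrac{a}{c}M[\lambda])F_{k,M}(\tau+d/c,\,z-\lambda/c)$ with $F_{k,M}(\tau,z)=\sum_{p,q}(\tau+p)^{-k}e(-M[z+q]/(\tau+p))$, applies Poisson summation to compute the $(n,r)$-th coefficient of $F_{k,M}$ exactly (it is a constant times $(4n|M|-\tilde{M}[r^t])^{k-g/2-1}$, vanishing when $D_1\ge 0$), and then bounds the remaining Gauss sums, with $k>g+2$ ensuring convergence of the $c$-sum. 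That computation is the source of the exponent $k-g/2-1$ and cannot be replaced by the phrase ``divisor-type sums''; note also that Section 2 of the paper records the Fourier expansion of the Poincar\'e series, not of $E_{k,M}$, so there is nothing to ``cf.''\ there. Finally, be careful that for general index $M$ the Eisenstein subspace of $J_{k,M}$ is in general spanned by several Eisenstein series $E_{k,M,\mu}$ attached to the classes $\mu\bmod 2M\mathbb{Z}^g$, so the decomposition $F=E+C$ requires the same coefficient estimate for each of them (the identical Poisson-summation argument applies, but this should be said).
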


\begin{proof} If $F$ is a cusp form, then the required estimate was 
proved by B\"{o}cherer and Kohnen \cite{boch}. If $F$ is not a cusp form,
then it can be  written as a linear combination of the Eisenstein series $E_{k, 
M}$ and a cusp form. We now show that $e_{k, M}(n, r)$, 
the $(n, r)$-th Fourier coefficient of $E_{k, M}$, satisfies the estimate 
\eqref{eisen-esti}, from which the lemma follows.
Taking the same set of coset representatives as in the proof of the above 
lemma, we get 
$$ 
E_{k, M}= \frac{1}{2} \sum_{c, d \in \mathbb Z, (c, d)=1} \sum_{\lambda
 \in \mathbb Z^g} 1|_{k, M}\left[ \left( \begin{array}{cc}
               a & b \\
               c & d
               \end{array}\right),(a\lambda, b\lambda) \right]
$$
$$
=\frac{1}{2} \sum_{c, d \in \mathbb Z, (c, d)=1} \sum_{\lambda
 \in \mathbb Z^g}(c\tau+d)^{-k} e\left(\frac{-c}{c\tau+d} 
M[z+a\lambda \tau+b \lambda]+M[a \lambda]\tau+2 a \lambda^tMz \right)
$$

$$
=\frac{1}{2} \sum_{c, d \in \mathbb Z, (c, d)=1} \sum_{\lambda
 \in \mathbb Z^g}(c\tau+d)^{-k} e\left(M[\lambda]\frac{a\tau+b}{c\tau+d} 
+ \frac{2 \lambda^tMz}{c\tau+d}-c \frac{M[z]}{c\tau+d} \right)
$$

Proceeding in the usual way by splitting the sum  into two parts as $c=0$ 
and $c \neq 0,$ we see that
$$
E_{k, M}(\tau, z)=\sum_{\lambda \in \mathbb Z^g}e(M[\lambda]\tau+2\lambda^tMz)
+ \sum_{c=1}^{\infty}c^{-k}\sum_{d(c), (d, c)=1}\sum_{\lambda(c)}
e(\frac{a}{c}M[\lambda]) F_{k, M}(\tau+d/c, z-\lambda/c),
$$ 
where $ F_{k, M}(\tau, z)= \sum_{p \in \mathbb Z, q \in \mathbb Z^g}
(\tau+p)^{-k} e\left(\frac{-M[z+q]}{\tau+p} \right).$ Using the Poisson 
summation formula the $(n, r)$-th Fourier coefficient of 
$F_{k, M}(\tau, z)$ is given by 
$$
 \gamma(n, r) = \left\{\begin{array}{ll}
0 & {\rm if~} \tilde{M}[r^t] \geq 4n|M|,\\
\alpha_{k, g}|M|^{-1/2} \left(\frac{2 \pi i}{4|M|}
(4n|M| -\tilde{M}[r^t]\right)^{k-g/2-1}  & {\rm if~}  \tilde{M}[r^t] < 4n|M|.\\
\end{array}\right. 
$$ 
where $\alpha_{k, g}=\displaystyle \left(\frac{1}{2 i}\right)^{g/2} 
\frac{\pi \csc(\pi(k-g/2))}{\Gamma(k-g/2)}. $
Plugging in this Fourier coefficient and estimating the Gauss sum we get
$$
e_{k, M}(n, r) \ll D_1^{k-g/2-1},
$$
where the $\ll$ constant depends only on $k, g$ and $|M|.$
\end{proof}

\smallskip

We need the following lemma which gives the absolute convergence of a series 
which is required to get an expression of the Rankin-Cohen bracket of $F$ 
with the Eisenstein series in terms of the Poincar\'e series. 

\begin{lem} \label{absolute}
The series
$$
v^ke^{-2\pi M[y]/v} \sum_{n\in \mathbb Z, r \in \mathbb Z^g, 
4n \ge M_1^{-1}[r^t],\atop{\gamma \in \Gamma^J_{1, \infty}/\Gamma^J_1}} 
(4n|M_1|-\tilde{M_1}[r^t])^\nu~ b(n,r)~ e(n\tau+ rz)\big\vert_{k, M} \gamma$$
$(\tau=u+iv, z_j=x_j+iy_j, y=(y_1, y_2, \cdots, y_g)^t)$ is absolutely 
uniformly convergent on the subsets $V_{\epsilon, C}=\{ (\tau, z) \in 
\mathcal H  \times \mathbb C^g| v \geq \epsilon, |y_jv^{-1}| \leq C, 
|x_j| \leq 1/\epsilon,
u \leq 1/\epsilon, \forall j=1, 2, \cdots, g\}$ for given $\epsilon >0, C>0$ 
\end{lem}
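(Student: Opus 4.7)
The plan is to reduce the absolute convergence, by termwise majorisation, to that of a classical Eisenstein/theta hybrid series. First, applying \lemref{eisen-expan} to $G\in J_{k_1,M_1}$, legitimate because $k_1>g+2$, we obtain
\[
|b(n,r)|\ll(4n|M_1|-\tilde{M}_1[r^t])^{k_1-g/2-1},
\]
with the implied constant depending only on $k_1$, $g$, $|M_1|$. Substituting this estimate, it suffices to prove the uniform absolute convergence on $V_{\epsilon,C}$ of the non-negative series
\[
v^k e^{-2\pi M[y]/v}\!\!\sum_{\substack{n\in\mathbb Z,\ r\in\mathbb Z^g\\ 4n\ge M_1^{-1}[r^t]}}\sum_{\gamma}(4n|M_1|-\tilde{M}_1[r^t])^{\nu+k_1-g/2-1}\,\bigl|e(n\tau+rz)\big\vert_{k,M}\gamma\bigr|.
\]

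Second, parametrise the cosets $\gamma\in\Gamma^J_{1,g,\infty}\backslash\Gamma^J_{1,g}$ as $[\gamma_0,(a\lambda,b\lambda)]$ with $\gamma_0=\bigl(\begin{smallmatrix}a&b\\c&d\end{smallmatrix}\bigr)$ running over $\Gamma_\infty\backslash SL_2(\mathbb Z)$ and $\lambda\in\mathbb Z^g$, exactly as in the proof of \lemref{actionheat}. Writing out the slash action of $e(n\tau+rz)$ explicitly produces a factor $|c\tau+d|^{-k}$ together with exponential factors that are linear in $(n,r)$ and quadratic in $\lambda$ with quadratic piece $M[\lambda]v/|c\tau+d|^2$. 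For fixed $\gamma$, the inner sum over $(n,r)$ is then of the shape $\sum(4n|M_1|-\tilde{M}_1[r^t])^{\nu+k_1-g/2-1}e^{-2\pi Q_\gamma(n,r)}$ for a form $Q_\gamma$ that, because $4n\ge M_1^{-1}[r^t]$ forces $n$ to grow at least quadratically in $|r|$, majorises a positive-definite form in $(n,r)$; hence the inner sum converges to a bounded function of $(\tau,z,\gamma)$ by the polynomial-times-exponential estimate.

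Third, sum the resulting bound over $(c,d)$ coprime and $\lambda\in\mathbb Z^g$. The prefactor $v^k e^{-2\pi M[y]/v}$ is precisely what is needed to cancel the $|c\tau+d|^{-k}$ and the $y$-dependent pieces created by the slash action, so that after cancellation the $\lambda$-sum is dominated by a theta-type series $\sum_{\lambda\in\mathbb Z^g}e^{-2\pi vM[\lambda]/|c\tau+d|^2}$ times a bounded factor, which converges because $M$ is positive-definite, while the residual outer sum is dominated by $\sum_{(c,d)=1}|c\tau+d|^{-k+O(1)}$, convergent for $k>g+2$. The constraints $v\ge\epsilon$, $|x_j|\le 1/\epsilon$, $u\le 1/\epsilon$, $|y_jv^{-1}|\le C$ defining $V_{\epsilon,C}$ keep all the various exponents bounded above by uniform constants, and the Weierstrass $M$-test delivers the claimed uniform convergence.

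The principal obstacle is the exponent bookkeeping: one must verify that, once the prefactor $v^k e^{-2\pi M[y]/v}$ is combined with the $\lambda$- and $y$-dependent pieces of the slash action, the total $\lambda$-exponent is a negative-definite quadratic form (giving Gaussian decay in $\lambda$ uniformly in the remaining variables) and the total $(n,r)$-exponent is strictly negative on the cone $4n\ge M_1^{-1}[r^t]$. Positive-definiteness of both $M$ and $M_1$, together with the fact that $v/|c\tau+d|^2$ is the imaginary part of $\gamma_0\circ\tau$ and hence strictly positive, is what ultimately secures both sign conditions.
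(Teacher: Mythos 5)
Your opening step (invoking \lemref{eisen-expan} to replace $b(n,r)$ by $(4n|M_1|-\tilde M_1[r^t])^{k_1-g/2-1}$) matches the paper, but after that your route diverges from the paper's, and the divergent part contains a genuine gap. The paper does not attempt a direct uniform estimate on $V_{\epsilon,C}$: it embeds $\Gamma^J_{1,g}$ into the Siegel modular group $\Gamma_{g+1}$, identifies the series with a sub-series of the absolute-value series of a Siegel Poincar\'e series of exponential type, and invokes the Kohnen--Maass majorization principle to reduce uniform convergence on $V_{\epsilon,C}$ to convergence at the single point $(\tau,z)=(i,0,\dots,0)$, where the explicit sum over $(n,r)$, $(c,d)$, $\lambda$ is then estimated as in Choie--Kohnen using $k_2>k_1+g+2$.

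The gap in your version is in the second step. You claim that for fixed $\gamma$ the inner sum $\sum_{n,r}(4n|M_1|-\tilde M_1[r^t])^{\nu+k_1-g/2-1}e^{-2\pi Q_\gamma(n,r)}$ ``converges to a bounded function of $(\tau,z,\gamma)$.'' It converges for each fixed $\gamma$, but it is \emph{not} bounded uniformly in $\gamma$: the relevant exponential decay in $(n,r)$ has width governed by $v'=v/|c\tau+d|^2$ (the imaginary part of $\gamma_0\circ\tau$), so by comparison with $\sum_n n^{s}e^{-2\pi nv'}\asymp (v')^{-s-1}$ the inner sum grows like a positive power of $|c\tau+d|^2/v$, the power being determined by $\nu+k_1-g/2-1$ and the $g$-dimensional $r$-sum. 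This polynomial growth in $|c\tau+d|$ must be absorbed by the factor $|c\tau+d|^{-k}$, and doing the bookkeeping one finds that the outer sum over $(c,d)$ behaves like $\sum|c\tau+d|^{-(k_2-k_1)+O(g)}$, not $\sum|c\tau+d|^{-k+O(1)}$. This is precisely why the theorem assumes $k_2>k_1+g+2$; your argument never uses that hypothesis and concludes convergence from ``$k>g+2$'' alone, which is not sufficient. (A secondary inaccuracy: the prefactor $v^ke^{-2\pi M[y]/v}$ does not ``cancel'' $|c\tau+d|^{-k}$ --- if it did, the $(c,d)$-sum could not converge; its role is to supply the invariant majorant and to offset the positive term $e^{+2\pi M_1[y']/v'}$ left over after completing the square in $r$, which your sketch acknowledges but does not carry out.) A direct estimate in the spirit of your proposal can be made to work, but only after tracking the $\gamma$-dependence of the inner sum explicitly and bringing in $k_2>k_1+g+2$ at that point; alternatively, the paper's reduction to a single point via the Kohnen--Maass majorization avoids having to prove uniformity over $V_{\epsilon,C}$ by hand.
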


\begin{proof}
Using \lemref{eisen-expan}, it is sufficient to prove the uniform convergence 
of the series
$$
v^ke^{-2\pi M[y]/v} \sum_{n\in \mathbb Z, r \in \mathbb
 Z^g, 4n \ge M_1^{-1}[r^t], \atop{\gamma \in \Gamma^J_{1, 
\infty}/\Gamma^J_1}} (4n|M_1|- \tilde{M_1}[r^t])^{\nu+k_1-g/2-1} 
|\left(e(n\tau+ rz)\big\vert_{k, M} \gamma\right)(\tau, z)|
$$
in the given ranges. Let $\tau' \in \mathcal H_g$ such that $Z:=\left( 
\begin{array}{cc}
               \tau & z \\
               z & \tau'
               \end{array}\right) \in \mathcal H_{g+1}$.
Also let  $T = \left( \begin{array}{cc}
               n & r^t/2 \\
               r/2 & M
               \end{array}\right)$. Note that by the assumption that 
$4n\ge M_1^{-1}[r^t]$, we see that  $T$ is positive semi-definite. 
Now we embed $\Gamma_{1,g}^J= \Gamma_1 \ltimes (\mathbb Z^g 
\times  \mathbb Z^g) $ into $\Gamma_{g+1}$ (denoted by $\gamma \mapsto 
\gamma^*$) defined by combining the following two embeddings: 
$$
\left(\left(\begin{array}{cc}
               a & b \\
               c & d
               \end{array}\right), (\lambda, \mu)\right) \mapsto 
\left( \left(\begin{array}{cccc}
               a & 0 & b & 0 \\
               0 & I_{g-1} & 0 & 0_{g-1}\\ 
                c & 0 & d & 0 \\
                0 & 0_{g-1} & 0 & I_{g-1}
              \end{array}\right), (\lambda, \mu)\right) 
$$
and 
$$
\left( \left(\begin{array}{cc}
               A & B \\
               C & D
               \end{array}\right), (\lambda, \mu), \right) \mapsto
\left(\begin{array}{cccc}
               A & 0 & B & \mu' \\
               \lambda & 1 & \mu & 0\\ 
                C & 0 & D & -\lambda'\\
                0 & 0 & 0 & 1
               \end{array}\right) $$
where $(\lambda'^t, \mu'^t)=(\lambda, \mu) \left(\begin{array}{cc}
               A & B \\
               C & D
              \end{array}\right)^{-1}$. 
Therefore, we have
$$
\left(e(n\tau+rz)\big\vert_{k, M} \gamma\right)(\tau,z) = 
e(m \tau') \left(e(tr(TZ))\big\vert_k \gamma^*\right)(Z),
$$
where $\big\vert_k$ is the usual stroke operation on functions $F$ defined on 
${\mathcal H}_{g+1}$. We can now view the sum of the absolute terms of the 
$(n, r)$-th Poincar\'e series as a sub-series of the sum of the absolute terms of 
the $T$-th Poincar\'e series on $\Gamma_{g+1}$. 
Let $(\tau, z) \in V_{\epsilon, C}$ for some $\epsilon >0$ and $C >0.$ Then 
by taking 
$\tau'=i \displaystyle{\min_{1 \le j \le g} 
\left\{\left(\frac{y_j^2}{v}+\delta\right)\right\}}$ 
with $\delta > 0,$ we see that  
$Z=\left( \begin{array}{cc}
               \tau & z \\
               z & \tau'
               \end{array}\right) \in {\mathcal H}_{g+1}$ 
with $Y= {\rm Im~} Z > \epsilon'/2$ for some $\epsilon'$ depending on $\epsilon, 
C$ 
and $\delta.$
The sum of the absolute terms of the $T$-th Poincar\'e series on
the subsets $Y \ge \epsilon' I_g, tr(X'X) \le \frac{1}{\epsilon'}$ (up to 
some constants) is majorized by that sum evaluated at an arbitrary single point
$Z_0,$ say $Z_0=iI_g,$ (cf. \cite{{kohnen},{maass}}). So, it is sufficient to 
prove the convergence of  the above series at $(\tau, z) = (i, 0, \cdots, 0)$. 
i.e.,  the convergence of the series (using the the coset representation)
\begin{equation*}
\begin{split}
\sum_{n\in \mathbb Z, r \in \mathbb Z^g, 
4n > M_1^{-1}[r^t],\atop{(c, d)=1, \lambda \in \mathbb Z^g}}
(4n|M_1|- \tilde{M_1}[r^t])^{\nu+k_1-g/2-1}|ci+d|^{-k} 
|e^{2 \pi i (\frac{-c}{c i+d}M[a\lambda i+b \lambda]+ M[a\lambda])}| \hskip 
1.5cm &\\ 
\hskip 5cm \times ~|e^{2 \pi i (n \frac{ai+b}{ci+d} + r\frac{a\lambda i+ b 
\lambda}
{ci+d})}|. &\\
\end{split}
\end{equation*}
Now proceeding as in \cite{choie5} we get the required convergence with the
assumption that $k_2>k_1+g+2.$
\end{proof}

\smallskip

\begin{prop} \label{represent} 
Let $k, k_1, k_2, M, M_1, M_2$ be as in Theorem 3.3. 
Let $G \in J_{k_1, M_1}$ with Fourier expansion 
\begin{equation*}
G(\tau, z)=  \sum_{n\in \mathbb Z, r \in \mathbb Z^g, 
\atop{4n \ge M_1^{-1}[r^t]}} b(n, r) e(n\tau+rz).
\end{equation*} 
Then 
\begin{equation}\label{prop}
[G, E_{k_2, M_2}]_\nu = c_{k_1, k_2,M_1, M_2, g; \nu}
{\displaystyle\sum_{n \in \mathbb Z, r \in 
{\mathbb Z}^g, 4n \ge M_1^{-1}[r^t]}} (4n|M_1|-\tilde{M_1}[r^t])^\nu ~b(n, r)~
P_{k, M; (n, r)},
\end{equation}  
where 
$\displaystyle c_{k_1, k_2,M_1, M_2, g; \nu}=(2\pi)^{-2\nu}|M_2|^{-\nu} 
\frac{v!(k_2-g/2-1)!}{(k_2-g/2+\nu-1)!}$
\end{prop}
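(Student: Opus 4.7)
The strategy is to unfold the Eisenstein series using the explicit formula for $L^{\nu-l}_{M_2}E_{k_2,M_2}$ supplied by Lemma \ref{actionheat}, and then to apply the transformation law \eqref{matrixaction} for $L^\nu_{M_1}$ in order to reorganise the resulting double sum as a single coset sum whose seed is the Fourier expansion of $L^\nu_{M_1}(G)$. Concretely, the first step is to invoke Lemma \ref{actionheat} with $\nu$ replaced by $\nu-l$, $k$ by $k_2$, and $M$ by $M_2$, which gives
$$
L^{\nu-l}_{M_2}E_{k_2,M_2} = (-8\pi i)^{\nu-l}\,\frac{(k_2'+\nu-l)!}{k_2'!}\,|M_2|^{\nu-l}\sum_{\gamma^*}\Bigl(\frac{c}{c\tau+d}\Bigr)^{\nu-l}\bigl(1|_{k_2,M_2}\gamma^*\bigr),
$$
where $\gamma^*=((a,b;c,d),(a\lambda,b\lambda))$ runs over representatives of $\Gamma^J_{1,g,\infty}\backslash\Gamma^J_{1,g}$.

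Substituting into the definition of $[G,E_{k_2,M_2}]_\nu$ and interchanging the $l$-sum with the coset sum (permissible by the absolute convergence established in Lemma \ref{absolute}), one obtains an expression of the form
$$
[G,E_{k_2,M_2}]_\nu = \sum_{\gamma^*}\left(\sum_{l=0}^\nu \alpha_l(\gamma^*)\,L^l_{M_1}(G)\Bigl(\frac{c}{c\tau+d}\Bigr)^{\nu-l}\right)\bigl(1|_{k_2,M_2}\gamma^*\bigr)
$$
with explicit coefficients $\alpha_l(\gamma^*)$ coming from the binomial factors of the bracket and from the prefactor above. The crucial algebraic step is to compare $\alpha_l(\gamma^*)$ with the coefficient of $L^l_{M_1}(G)(c/(c\tau+d))^{\nu-l}$ in the expansion of $(L^\nu_{M_1}G)|_{k_1+2\nu,M_1}\gamma^*$ given by \eqref{matrixaction}: after collapsing the products of binomial coefficients and Pochhammer symbols, the ratio turns out to be independent of $l$, which is the combinatorial identity engineered into the very definition of the Rankin--Cohen bracket. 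Calling this common ratio $B$ and using the multiplicativity $(F|_{k_1+2\nu,M_1}\gamma^*)(H|_{k_2,M_2}\gamma^*)=(FH)|_{k,M}\gamma^*$ with $F=L^\nu_{M_1}G$ and $H=1$, one deduces
$$
[G,E_{k_2,M_2}]_\nu = B\sum_{\gamma^*} L^\nu_{M_1}(G)\big|_{k,M}\gamma^*.
$$

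To finish, the Fourier expansion of $L^\nu_{M_1}(G)$ is read off from the fact that $L_{M_1}$ multiplies the $(n,r)$-th Fourier coefficient of $G$ by $(2\pi i)^2(4n|M_1|-\tilde{M_1}[r^t])$, so that the coefficient at $(n,r)$ becomes $(2\pi i)^{2\nu}(4n|M_1|-\tilde{M_1}[r^t])^\nu b(n,r)$. Interchanging the $(n,r)$-sum with the coset sum---once more justified by Lemma \ref{absolute}---and recognising $\sum_{\gamma^*}e(n\tau+rz)|_{k,M}\gamma^*=P_{k,M;(n,r)}$ produces the right-hand side of \eqref{prop}, and collecting the prefactors yields the claimed constant $c_{k_1,k_2,M_1,M_2,g;\nu}$. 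The main technical obstacle is the combinatorial identity showing that the ratio of $\alpha_l(\gamma^*)$ to the coefficient from \eqref{matrixaction} is independent of $l$; once this is in hand, the remainder of the proof is routine unfolding and Fourier expansion, justified throughout by the absolute convergence supplied by Lemma \ref{absolute}.
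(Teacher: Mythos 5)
Your argument is the paper's proof read in the opposite direction: the paper starts from the Poincar\'e--series sum on the right of \eqref{prop}, unfolds it via \eqref{matrixaction}, and recognises the coset sums as heat--operator images of $E_{k_2,M_2}$ through \lemref{actionheat}, whereas you expand the bracket first and reassemble $\sum_{\gamma}(L^\nu_{M_1}G)\big\vert_{k,M}\gamma$; the ingredients (\lemref{actionheat}, \eqref{matrixaction}, the eigenvalue $(2\pi i)^2(4n|M_1|-\tilde{M_1}[r^t])$, and the convergence from \lemref{absolute}) are identical, and your key combinatorial claim is correct --- the common ratio is $B=(-1)^\nu|M_2|^\nu\binom{k_2'+\nu}{\nu}$. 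One caveat: if you actually collect the prefactors, you get $[G,E_{k_2,M_2}]_\nu=B(2\pi i)^{2\nu}\sum(\cdots)=(2\pi)^{2\nu}|M_2|^{\nu}\frac{(k_2'+\nu)!}{\nu!\,k_2'!}\sum(\cdots)$, which is the \emph{reciprocal} of the displayed $c_{k_1,k_2,M_1,M_2,g;\nu}$, so your final sentence asserting that the prefactors ``yield the claimed constant'' does not hold as written; the same inversion is already latent in the paper, whose own computation terminates with the Poincar\'e--series sum equal to $c_{k_1,k_2,M_1,M_2,g;\nu}\,[G,E_{k_2,M_2}]_\nu$ rather than the other way around, so this should be flagged and checked rather than asserted.
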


\begin{proof} 
Using the definition of the Poincar\'e series, the action of the 
heat operator on Fourier coefficients, and  by the absolute convergence 
(obtained in \lemref{absolute}) we see that the series on the right-hand 
side of 
\eqref{prop} can be written as 
\begin{equation*}
\begin{split}
\sum_{\gamma \in \Gamma^J_{1, g, \infty}\backslash \Gamma^J_{1, g}} 
\left(1\big\vert_{k, M} \gamma\right)(\tau, z) \cdot (2\pi i)^{-2\nu}
(L^\nu_{M_1} G)(\gamma \circ(\tau, z)) \hskip 5cm &\\
\hskip 3.5cm =(2\pi i)^{-2\nu} \sum_{\gamma \in \Gamma^J_{1, g, \infty}\backslash 
\Gamma^J_{1, g}} \left(1\big\vert_{k_2, M_2} \gamma\right)(\tau, z) 
(L^\nu_{M_1} G)\big\vert_{k_1+2\nu, M_1} \gamma(\tau, z). &\\
\end{split}
\end{equation*}
Taking the same set of representatives as in the proof of 
\lemref{actionheat} 
for the sum over $\gamma$ and using \eqref{matrixaction} and the fact that 
$G\in J_{k_1, M_1}$, we get 
\begin{equation*}
\begin{split}
(2\pi i)^{-2\nu} \sum_{\gamma \in \Gamma^J_{1, g, \infty}\backslash
\Gamma^J_{1, g}} \left(1\big\vert_{k_2, M_2} \gamma\right)(\tau, z)
(L^\nu_{M_1} G)\big\vert_{k_1+2\nu, M_1} \gamma(\tau, z) \hskip 6cm &\\
\quad = ~(2\pi i)^{-2\nu} \sum^\nu_{l=0} 4^{\nu-l} 
\frac{(k_1-g/2+\nu-1)!}
{(k_1-g/2+l-1)!} \binom{\nu}{l}|M_1|^{\nu-l}  L^l_{M_1}(G)(\tau, z) \hskip 5cm 
&\\
\!\!\!\!\!\!\!\!\times \!\!\!\!\!\!\!\!\sum_{ 
{\tiny \begin{pmatrix} a&b \\ c& d\\ \end{pmatrix}}, 
\lambda \in \mathbb Z^g } \!\!\!\!\!\!\!\!
\left(\frac{\pi i c}{c\tau+d}\right)^{\nu-l}
\!\!\!\!\!
(c\tau+d)^{-k_2} e\left(
 \frac{-c~ M_2[z+a \lambda  \tau+ b \lambda]}{c\tau+d}
+M_2[a \lambda]+2(a \lambda)^tM_2z \right)\hskip 0.75cm  &\\
\end{split}
\end{equation*}
Using \lemref{actionheat}, the inner sum in the above expression is equal to 
$$
(-4)^{-(\nu-l)} \frac{(k_2-g/2-1)!}{(k_2-g/2+\nu-l-1)!} |M_2|^l L^{\nu-l}_
{M_2} E_{k_2, M_2}(\tau, z).
$$
Therefore, we finally find that the sum on the right-hand side of 
\eqref{prop} equals 
\begin{equation*}
\begin{split}
(2\pi)^{-2\nu}|M_2|^{-\nu} \sum^\nu_{l=0} (-1)^l 
\frac{(k_1-g/2+\nu-1)! (k_2-g/2-1)! } {(k_1-g/2+l-1)! (k_2-g/2+\nu-l-1)!} 
\binom{\nu}{l}|M_1|^{\nu-l} |M_2|^l &\\
\hskip 5cm \times ~L^l_{M_1}(G)(\tau, z) L^{\nu-l}_{M_2} E_{k_2, M_2}(\tau, z).
&\\
\end{split}
\end{equation*}
The proof is now complete.
\end{proof}

\bigskip

\subsection{Proof of \thmref{main}}~~ 
We first observe that by Lemma 3.5 the series on the right hand side of
 \eqref{main-equ} is absolutely convergent and hence is majorized by
$$
\sum_{n \ge 1, r \in \mathbb Z^g \atop{4n \ge M_1^{-1}[r^t]}}
\frac{\left(4n|M_1|-\tilde{M_1}[r^t]\right)^{k_1-g/2-1+\nu}}
{\left(4n|M|-\tilde{M}[r^t]\right)^{k/2-1}} \ll \sum_{n \ge 1}
\frac{n^{g/2}. n^{k_1-g/2-1+\nu}} {n^{k/2-1}}= \sum_{n \ge 1} \frac{1}
{n^\frac{k_2-k_1}{2}}< \infty
$$
after putting $k=k_1+k_2+2\nu$ and by our assumption that $k_2>k_1+g+2$.

The standard fundamental domain for the 
action of $\Gamma_{1, g}^J$ on ${\mathcal H}\times {\mathbb C}^{(g,1)}$ is 
contained in one of the sets $V_{\epsilon, C}$ occurring in the statement of 
\lemref{absolute}, for appropriate $\epsilon$ and $C$. Therefore,  using 
\lemref{absolute} we deduce from \lemref{represent} that 
\begin{equation*}
\langle F, [G, E_{k_2, M_2}]_\nu \rangle =  
c_{k_1, k_2,M_1, M_2, g; \nu} {\displaystyle\!\!\!\!\!\!\sum_{n \in 
{\mathbb Z}, r \in {\mathbb Z}^g, 4n \ge M_1^{-1}[r^t]}} \!\!\!
(4n|M_1|- \tilde{M_1}[r^t])^\nu \overline{b(n, r)} \langle F, P_{k, M; (n, r)}\rangle. 
\end{equation*}
where $c_{k_1, k_2,M_1, M_2, g; \nu}$ is defined as in \eqref{prop}.

Note that $4n > M_1^{-1}[r^t]$ implies $4n > M^{-1}[r^t]$ and hence the
Poincar\'e series $P_{k, M; (n, r)}$ are all cusp forms. On the other hand,
if $4n=M_1^{-1}[r^t],$ $4n \ge M^{-1}[r^t]$ implies $r=0$ and $n=0,$ in
which case one has the Eisenstein series $E_{k, M}.$ Since
$F$ is a cusp form, $ \langle F,E_{k, M} \rangle$ is zero. 
Thus using  \eqref{poincare}, we obtain 

\begin{equation*}
\langle F, [G, E_{k_2, M_2}]_\nu\rangle = 
c_{k, k_2, M, M_2, g; \nu} \sum_{n \in {\mathbb Z}, r \in {\mathbb Z}^g,
\atop{ 4n \ge {M}_1^{-1}[r^t]}}
~\frac{(4n|M_1|- \tilde{M}_1[r^t])^\nu ~a(n, r)
\overline{b(n,r)}}{(4n|M|- \tilde{M}[r^t])^{k-g/2-1}},
\end{equation*}
where $c_{k, k_2,M, M_2, g; \nu}$ is defined as in \eqref{constant}. 
\hfill \qed

\bigskip

\end{document}